\newtheorem{theorem}{Theorem}
\newtheorem{corollary}[theorem]{Corollary}
\newtheorem{proposition}[theorem]{Proposition}
\newtheorem{example}[theorem]{Example}
\newcommand{\pk}{{\rm pk\,}}
\newcommand{\lpk}{{\rm pk^{\emph{l}}\,}}
\newcommand{\man}{\mathfrak{A}_n}
\newcommand{\ma}{\mathfrak{A}_1}
\newcommand{\mb}{\mathfrak{A}_2}
\newcommand{\mc}{\mathfrak{A}_3}
\newcommand{\md}{\mathfrak{A}_4}
\newcommand{\msn}{\mathfrak{S}_n}
\newcommand{\lrf}[1]{\lfloor #1\rfloor}
\newcommand{\lrc}[1]{\lceil #1\rceil}
\newcommand{\as}{{\rm as\,}}
\newcommand{\Eulerian}[2]{\genfrac{<}{>}{0pt}{}{#1}{#2}}
\title{Enumeration of permutations by number of alternating runs}
\author{Shi-Mei Ma}
\address{School of Mathematics and Statistics,
Northeastern University at Qinhuangdao,
Hebei 066004,China}
\email{shimeima@yahoo.com.cn (S.-M. Ma)}
\thanks{This work was supported by NSFC (11126217) and the Fundamental Research Funds for the Central Universities (N100323013).}
\subjclass[2010]{Primary 05A05; Secondary 05A15}
\date{\today}
\keywords{Alternating runs; Up-down runs; Andr\'e permutations; Convolution formulas}
\begin{document}

\begin{abstract}
Let $R(n,k)$ denote the number of permutations of $\{1,2,\ldots,n\}$ with
$k$ alternating runs. We find a grammatical description of the numbers $R(n,k)$
and then present several convolution formulas involving the generating function for the numbers $R(n,k)$.
Moreover, we establish a connection between alternating runs and Andr\'e permutations.
\end{abstract}

\maketitle
\section{Introduction}
Let $\msn$ denote the symmetric group of all permutations of $[n]$, where $[n]=\{1,2,\ldots,n\}$.
Let $\pi=\pi(1)\pi(2)\cdots \pi(n)\in\msn$. We say that $\pi$ changes
direction at position $i$ if either $\pi({i-1})<\pi(i)>\pi(i+1)$, or
$\pi(i-1)>\pi(i)<\pi(i+1)$, where $i\in\{2,3,\ldots,n-1\}$. We say that $\pi$ has $k$ {\it alternating
runs} if there are $k-1$ indices $i$ such that $\pi$ changes
direction at these positions. Let $R(n,k)$ denote the number of
permutations in $\msn$ with $k$ alternating runs.
There is a large literature devoted to the numbers $R(n,k)$ (see~\cite[\textsf{A059427}]{Sloane}). The reader is referred to~\cite{BE00,CW08,Ma120,Ma122,Sta08} for recent results on this subject.

Andr\'e~\cite{Andre84} was the first to study
the alternating runs of permutations and he obtained the following recurrence relation
\begin{equation}\label{rnk-recurrence}
R(n,k)=kR(n-1,k)+2R(n-1,k-1)+(n-k)R(n-1,k-2)
\end{equation}
for $n,k\ge 1$, where $R(1,0)=1$ and $R(1,k)=0$ for $k\ge 1$.
For $n\geq 1$, we define
$R_n(x)=\sum_{k=1}^{n-1}R(n,k)x^k$.
Then by~\eqref{rnk-recurrence}, we obtain
\begin{equation}\label{Rnx-recurrence}
R_{n+2}(x)=x(nx+2)R_{n+1}(x)+x\left(1-x^2\right)R_{n+1}'(x),
\end{equation}
with initial value $R_1(x)=1$. The first few terms of $R_n(x)$'s are given as follows:
\begin{align*}
  R_2(x)& =2x, \\
  R_3(x)& =2x+4x^2, \\
  R_4(x)& =2x+12x^2+10x^3,\\
  R_5(x)& =2x+28x^2+58x^3+32x^4.
\end{align*}

The {\it Eulerian number} $\Eulerian{n}{k}$ enumerates the number of permutations in $\msn$
with $k$ descents (i.e., $1\leq i<n,\pi(i)>\pi(i+1)$).
Let
$A_n(x)=x\sum_{k=0}^{n-1}\Eulerian{n}{k}x^{k}$
be the {\it Eulerian polynomials}.
The polynomial $R_n(x)$ is closely related to $A_n(x)$:
\begin{equation*}\label{rnx-anx}
R_n(x)=\left(\dfrac{1+x}{2}\right)^{n-1}(1+w)^{n+1}A_n\left(\dfrac{1-w}{1+w}\right),\quad
w=\sqrt{\frac{1-x}{1+x}},
\end{equation*}
which was first established by David and
Barton~\cite[157-162]{DB62} and then stated more concisely by
Knuth~\cite[p.~605]{Knuth73}. In a series of papers~\cite{Carlitz78,Carlitz80,Carlitz81}, Carlitz studied the
generating functions for the numbers $R(n,k)$. In particular,
Carlitz~\cite{Carlitz78} proved that
\begin{equation}\label{CarlitzGF}
\sum_{n=0}^\infty \frac{z^n}{n!}\sum_{k=0}^nR(n+1,k)x^{n-k}=\frac{1-x}{1+x}\left(\frac{\sqrt{1-x^2}+\sin (z\sqrt{1-x^2})}{x- \cos (z\sqrt{1-x^2})}\right)^2.
\end{equation}
Recently, B\'ona and Ehrenborg~\cite[Lemma 2.3]{BE00} combinatorially proved that the polynomial $R_n(x)$ is divisible by $(1+x)^{\lrf{\frac{n}{2}}-1}$.

Let $\pi=\pi(1)\pi(2)\cdots \pi(n)\in\msn$. An {\it interior peak} in $\pi$ is an index $i\in\{2,3,\ldots,n-1\}$ such that $\pi(i-1)<\pi(i)>\pi(i+1)$.
Let $\pk(\pi)$ denote {\it the number of
interior peaks} in $\pi$. An {\it left peak} in $\pi$ is an index $i\in[n-1]$ such that $\pi(i-1)<\pi(i)>\pi(i+1)$, where we take $\pi(0)=0$.
Let $\lpk(\pi)$ denote {\it the number of
left peaks} in $\pi$. For example, the permutation $\pi=21435$ has $\pk(\pi)=1$ and $\lpk(\pi)=2$.
Let $W({n,k})$ denote the number of permutations in $\msn$ with $k$ interior peaks, and let $\widetilde{W}({n,k})$ denote the number of permutations in $\msn$ with $k$ left peaks. The numbers $W({n,k})$ and $\widetilde{W}({n,k})$ arise often in combinatorics and other branches of mathematics (see~\cite{Petersen09,Ma121,Petersen07}).

For $n\geq 1$, we define
$$W_n(x)=\sum_{k\geq0}W({n,k})x^k\quad {\text and}\quad \widetilde{W}_n(x)=\sum_{k\geq0}\widetilde{W}({n,k})x^k.$$
It is well known that the polynomials $W_n(x)$ satisfy the recurrence relation
\begin{equation*}
W_{n+1}(x)=(nx-x+2)W_n(x)+2x(1-x)W'_n(x),
\end{equation*}
with initial values $W_1(x)=1,W_2(x)=2$ and $W_3(x)=4+2x$, and the polynomials $\widetilde{W}_n(x)$ satisfy the recurrence relation
\begin{equation*}
\widetilde{W}_{n+1}(x)(x)=(nx+1)\widetilde{W}_n(x)+2x(1-x)\widetilde{W}'_n(x)(x),
\end{equation*}
with initial values $\widetilde{W}_0(x)=\widetilde{W}_1(x)=1,\widetilde{W}_2(x)=1+x$
and $\widetilde{W}_3(x)=1+5x$ (see~\cite[\textsf{A008303,A008971}]{Sloane}).

For $n\geq 0$, we define $$P_n(\tan \theta)=\frac{d^n}{d\theta^n}\tan\theta.$$
It is clear that $P_0(x)=x$ and $P_{n+1}(x)=(1+x^2)P_n'(x)$ (see~\cite{Hoffman95}).
The polynomials $P_n(x)$ are known as derivative polynomials for tangent and have been extensively studied (see~\cite{Cvijovic10,Franssens07,Ma121,Ma122}).
The following results were respectively obtained in~\cite{Ma121} and~\cite{Ma122}.
\begin{theorem}
For $n\geq 2$, we have
\begin{equation*}
W_n(x)=\frac{1}{x}(x-1)^{\frac{n+1}{2}}P_n\left(\frac{1}{\sqrt{x-1}}\right),
\end{equation*}
and
\begin{equation*}
R_n(x)=\left(\frac{x+1}{2}\right)^{n-1}\left(\frac{x-1}{x+1}\right)^{\frac{1}{2}(n+1)}P_n\left(\sqrt{\frac{x+1}{x-1}}\right),
\end{equation*}
\end{theorem}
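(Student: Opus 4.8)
The plan is to prove both identities by induction on $n$, verifying in each case that the right-hand side obeys the same recurrence and the same initial data as $W_n$ (resp. $R_n$). First I would record the two substitutions that make the relation $P_{n+1}(y)=(1+y^2)P_n'(y)$ mesh with the prefactors. For the peak polynomial set $u=(x-1)^{-1/2}$, so that $1+u^2=x/(x-1)$ and $du/dx=-\tfrac12u^3$; writing $(x-1)^{(n+1)/2}=u^{-(n+1)}$, the candidate becomes $F_n(x)=x^{-1}u^{-(n+1)}P_n(u)$. For the run polynomial set $v=\big((x+1)/(x-1)\big)^{1/2}$, so that $1+v^2=2x/(x-1)$ and $dv/dx=-1/\big(v(x-1)^2\big)$, and the candidate is $G_n(x)=\big(\tfrac{x+1}{2}\big)^{n-1}v^{-(n+1)}P_n(v)$. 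A preliminary remark removes any worry about the square roots: since $P_n(y)$ is a polynomial whose monomials all have degree congruent to $n+1$ modulo $2$, every half-integer power of $x-1$ (equivalently of $u$ or $v$) combines with the prefactor into an integer power, so $F_n$ and $G_n$ are honest rational functions of $x$.

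The engine of the induction is exactly $P_{n+1}(y)=(1+y^2)P_n'(y)$. Applying it to $F_{n+1}$ and using $1+u^2=x/(x-1)$, the next candidate collapses to the simple form $F_{n+1}(x)=(x-1)^{n/2}P_n'(u)$; the identical computation gives $G_{n+1}(x)=\big(\tfrac{x+1}{2}\big)^{n-1}\tfrac{x(x+1)}{x-1}v^{-(n+2)}P_n'(v)$. I would then differentiate $F_n$ (resp. $G_n$) by the chain rule, substitute into the right-hand side $(nx-x+2)F_n+2x(1-x)F_n'$ (resp. $x((n-1)x+2)G_n+x(1-x^2)G_n'$), and sort the outcome according to whether a term carries $P_n$ or $P_n'$. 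The decisive point is that every term proportional to $P_n$ cancels: for $W_n$ the bracketed coefficient reduces to $(n-1)-(n+1)+\tfrac{2}{x}\cdot x=0$, and for $R_n$, after rewriting $v^{-(n+3)}=v^{-(n+1)}\tfrac{x-1}{x+1}$, it reduces to $(n-1)+2-(n+1)=0$. What remains is precisely the $P_n'$ term, which matches the simplified $F_{n+1}$ (resp. $G_{n+1}$) above, closing the step.

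For the base case I would check $n=2$ directly from $P_2(y)=2y(1+y^2)$, obtaining $F_2(x)=2=W_2(x)$ and $G_2(x)=2x=R_2(x)$; since each recurrence determines its polynomial from the previous one, agreement at $n=2$ forces agreement for all $n\ge2$. I expect the only real obstacle to be bookkeeping: carrying the powers of $u$ and $v$ (equivalently the half-integer powers of $x-1$) correctly through the differentiation and verifying the cancellation of the $P_n$ terms. The substitutions $1+u^2=x/(x-1)$ and $1+v^2=2x/(x-1)$ are exactly what force those coefficients to vanish, so beyond the recurrence $P_{n+1}=(1+y^2)P_n'$ and these two algebraic simplifications no deeper input is required.
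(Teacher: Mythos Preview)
Your argument is correct. The base case $P_2(y)=2y(1+y^2)$ indeed gives $F_2=2=W_2$ and $G_2=2x=R_2$; the parity observation on the exponents in $P_n$ legitimately removes the square roots; and the inductive step works exactly as you outline. In particular the cancellation of the $P_n$-coefficients is genuine: for $W_n$ one finds the bracket $x\bigl[(n-1)+\tfrac{2}{x}+(2-\tfrac{2}{x})-(n+1)\bigr]=0$ (after rewriting $(1-x)u^{-(n-1)}=-u^{-(n+1)}$ and $-2(1-x)/x=2-2/x$), and for $R_n$ the bracket $x\bigl[(n-1)x+2-(x-1)(n-1)-(n+1)\bigr]=0$ after using $v^{-(n+3)}=v^{-(n+1)}\cdot\tfrac{x-1}{x+1}$. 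The surviving $P_n'$-term matches $F_{n+1}$ (resp.\ $G_{n+1}$) on the nose.

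As to comparison with the paper: the present paper does \emph{not} prove this theorem. It simply quotes the two identities from the author's earlier work \cite{Ma121,Ma122} and moves directly to the corollary. Those references obtain the formulas by the same mechanism you use---showing that the proposed closed forms obey the defining first-order recurrences for $W_n$ and $R_n$ via $P_{n+1}(y)=(1+y^2)P_n'(y)$---so your approach is essentially the intended one, supplied in somewhat more explicit detail than the paper itself gives.
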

So the following corollary is immediate.
\begin{corollary}
For $n\geq 2$, we have
\begin{equation}\label{RnxWnx}
R_n(x)=\frac{x(1+x)^{n-2}}{2^{n-2}}W_n\left(\frac{2x}{1+x}\right).
\end{equation}
\end{corollary}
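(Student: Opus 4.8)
The plan is to obtain the corollary by directly substituting the argument $2x/(1+x)$ into the closed form for $W_n(x)$ supplied by the theorem and then matching the result against the theorem's closed form for $R_n(x)$. Everything reduces to a single algebraic observation together with careful bookkeeping of the fractional exponents, which is why the corollary is billed as immediate.

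First I would set $y=\frac{2x}{1+x}$ and compute $y-1=\frac{x-1}{1+x}$. The decisive point is that this makes $\frac{1}{\sqrt{y-1}}=\sqrt{\frac{x+1}{x-1}}$, which is exactly the argument of $P_n$ appearing in the theorem's expression for $R_n(x)$. Hence the two occurrences of $P_n$ coincide after substitution, and no manipulation of the polynomial $P_n$ itself is required; the whole problem becomes matching the rational prefactors.

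Next I would substitute into $W_n(y)=\frac{1}{y}(y-1)^{(n+1)/2}P_n\!\left(\frac{1}{\sqrt{y-1}}\right)$, which yields $W_n\!\left(\frac{2x}{1+x}\right)=\frac{1+x}{2x}\left(\frac{x-1}{1+x}\right)^{(n+1)/2}P_n\!\left(\sqrt{\frac{x+1}{x-1}}\right)$. Multiplying by the prefactor $\frac{x(1+x)^{n-2}}{2^{n-2}}$ from the statement and collecting the powers of $(1+x)$ and of $2$ should collapse the elementary factors to $\left(\frac{1+x}{2}\right)^{n-1}\left(\frac{x-1}{x+1}\right)^{(n+1)/2}$, which is precisely the prefactor of $P_n$ in the theorem's formula for $R_n(x)$. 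Comparing with that formula then finishes the identification.

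The main obstacle—such as it is—is purely the bookkeeping of the half-integer exponents: one must track the exponent $(n+1)/2$ on $(x-1)/(1+x)$ against the integer powers $(1+x)^{n-2}$ and $(1+x)$ coming from the prefactor and from the factor $1/y$, and verify that the single factors of $x$ cancel. Since the theorem is assumed and the argument of $P_n$ is preserved exactly under the substitution, there is no genuine analytic content; the computation is a finite simplification.
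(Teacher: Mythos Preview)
Your proposal is correct and is exactly the route the paper takes: the corollary is stated as immediate from the theorem, and the only content is the substitution $y=2x/(1+x)$, the observation $y-1=(x-1)/(1+x)$ (so the arguments of $P_n$ match), and the elementary collection of the prefactors. There is nothing to add.
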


Let $\pi=\pi(1)\pi(2)\cdots \pi(n)\in\msn$.
An {\it alternating subsequence} of length $k$ is a subsequence $\pi({i_1})\cdots \pi({i_k})$ satisfying
$$\pi({i_1})>\pi({i_2})<\pi({i_3})>\cdots \pi({i_k}).$$
Let $\as(\pi)$ denote the length of the longest alternating subsequence of $\pi$.
For $n\geq 1$, we define
$$a_k(n)=\#\{\pi\in\msn:\as(\pi)=k\}.$$

Recently, Stanley~\cite{Sta08} initiated a study of the distribution of the length of the longest alternating subsequences of $\pi$. Put
$$F_k(x)=\sum_{n\geq 0}a_k(n)\frac{x^n}{n!}\quad {\text and}\quad A(x,t)=\sum_{k\geq 0}F_k(x)t^k.$$
Stanley~\cite[Theorem 2.3]{Sta08} showed that
$$A(x,t)=(1-t)\frac{1+\rho+2te^{\rho x}+(1-\rho)e^{2\rho x}}{1+\rho-t^2+(1-\rho-t^2)e^{2\rho x}},$$
where $\rho=\sqrt{1-t^2}.$

Let
$T_n(x)=\sum_{k=1}^na_k(n)x^k$. B\'ona~\cite[Section 1.3.2]{Bona12} found the following identity:
\begin{equation}\label{TnxRnx}
T_n(x)=\frac{1}{2}(1+x)R_n(x)\quad {\text for}\quad n\geq 2.
\end{equation}
Combining~\eqref{RnxWnx} and~\eqref{TnxRnx}, we get
\begin{equation}\label{Tnx-wnx}
T_n(x)=\frac{x(1+x)^{n-1}}{2^{n-1}}W_n\left(\frac{2x}{1+x}\right)\quad {\text for}\quad n\ge 1.
\end{equation}
Moreover, it follows from~\eqref{Rnx-recurrence} that the polynomials $T_n(x)$ satisfy the recurrence relation
\begin{equation}\label{Tnx-recurrence}
T_{n+1}(x)=x(nx+1)T_{n}(x)+x\left(1-x^2\right)T_{n}'(x).
\end{equation}
with initial values $T_0(x)=1$ and $T_1(x)=x$. Therefore,
\begin{equation}\label{ank-recurrence}
a_k(n)=ka_k(n-1)+a_{k-1}(n-1)+(n-k+1)a_{k-2}(n-1),
\end{equation}
with initial values $a_0(0)=a_1(1)=1$ and $a_k(0)=a_0(n)=0$ for $n,k\geq 1$.

It should be noted that
$a_k(n)$ also enumerates the number of permutations in $\msn$ that have $k$ up-down runs.
The {\it up-down runs} of a permutation $\pi$ are the alternating runs of $\pi$ endowed with a 0 in the front (see~\cite[\textsf{A186370}]{Sloane}). For example, the permutation $\pi=514632$ has 3 alternating runs and 4 up-down runs. The up-down runs of a permutation are closely related to the numbers of interior peaks and left peaks.
Clearly,
$a_1(n)=\widetilde{W}(n,0)=1$, corresponding to the identity permutation.
By analyzing the following two cases:
\begin{enumerate}
  \item [($c_1$)]a permutation $\pi$ starts in a descent (i.e., $\pi(1)>\pi(2)$);
  \item [($c_2$)]a permutation $\pi$ starts in an ascent (i.e., $\pi(1)<\pi(2)$),
\end{enumerate}
it is easy to verify that
$$\widetilde{W}(n,k)=a_{2k}(n)+a_{2k+1}(n)\quad {\text for}\quad 1\leq k\leq \lrf{{(n-1)}/{2}},$$
$${W}(n,k)=a_{2k+1}(n)+a_{2k+2}(n)\quad {\text for}\quad 0\leq k\leq \lrf{{(n-2)}/{2}}.$$
Moreover, applying the complement operation $\phi$ to $\pi$, i.e., $\phi(\pi(i))=\pi(n+1-i)$, it is evident that $$W(2k+1,k)=a_{2k+1}(2k+1), \quad  \widetilde{W}(2k,k)=a_{2k}(2k).$$

Let $C_n(x)=xW_n(x^2)+\widetilde{W}_n(x^2)$.
The polynomials $C_n(x)$ satisfy the recurrence relation
\begin{equation*}
C_{n+1}(x)=(1+nx^2)C_{n}(x)+x(1-x^2)C_n'(x) \quad {\text for}\quad n\geq 1,
\end{equation*}
with initial values $C_1(x)=1+x$, $C_2(x)=1+2x+x^2$ and $C_3(x)=1+4x+5x^2+2x^3$
(see~\cite[Section 2]{Ma121}).
Then we have the following result.
\begin{theorem}\label{thm-cnx}
For $n\geq 2$, we have
$$xC_{n}(x)=(1+x)T_n(x).$$
\end{theorem}
Thus, using~\eqref{Tnx-wnx}, we obtain
$$C_{n}(x)=\frac{(1+x)^{n}}{2^{n-1}}W_n\left(\frac{2x}{1+x}\right)\quad {\text for}\quad n\ge 1.$$

As pointed out by Canfield and Wilf~\cite[Section 6]{CW08},
the generating function for the numbers $R(n,k)$ can be elusive.
In this paper we further explore the generating function for the numbers $R(n,k)$.
In Section~\ref{grammars-sec}, we present a description of the numbers $R(n,k)$ and $a_k(n)$ using the notion of context-free grammars. As applications, we obtain several convolution formulas involving the polynomials $R_n(x),T_n(x),W_n(x)$ and $\widetilde{W}_n(x)$. In Section~\ref{And-Permutations}, we establish a connection between alternating runs and Andr\'e permutations.
\section{Context-free grammars}\label{grammars-sec}
The grammatical method was introduced by Chen~\cite{Chen93}
in the study of exponential structures in combinatorics.
Let $A$ be an alphabet whose letters are regarded as independent commutative indeterminates.
Following Chen~\cite{Chen93}, a {\it context-free grammar} $G$ over $A$ is defined as a set
of substitution rules replacing a letter in $A$ by a formal function over $A$.
The formal derivative $D$ is a linear operator defined with respect to a context-free grammar $G$.
For any formal functions $u$ and $v$, we have
$$D(u+v)=D(u)+D(v),\quad D(uv)=D(u)v+uD(v) \quad and\quad D(f(u))=\frac{\partial f(u)}{\partial u}D(u),$$
where $f(x)$ is a analytic function.
Using Leibniz's formula, we have
\begin{equation}\label{Dnab-Leib}
D^n(uv)=\sum_{k=0}^n\binom{n}{k}D^k(u)D^{n-k}(v).
\end{equation}
For example, if $G=\{x\rightarrow xy, y\rightarrow y\}$, then $$D(x)=xy,D(y)=y,D^2(x)=x(y+y^2),D^3(x)=x(y+3y^2+y^3).$$

It is well know that many combinatorial objects permit a description using the notion of context-free grammars.
Recall that the {\it Stirling number of the second kind}
${n \brace k}$ is the number of ways to partition $[n]$ into $k$ blocks.
Chen~\cite[Eq. (4.8)]{Chen93} found that
if $G=\{x\rightarrow xy, y\rightarrow y\}$,
then
\begin{equation*}
D^n(x)=x\sum_{k=1}^n{n \brace k}y^k.
\end{equation*}
In~\cite{Schett76},
Schett considered the grammar
\begin{equation*}
G=\{x\rightarrow yz, y\rightarrow xz,z\rightarrow xy\},
\end{equation*}
and established a relationship between the expansion of $D^n(x)$ and the Jacobi elliptic functions.
Dumont~\cite{Dumont79} established the connections
between Schett's grammar and permutations.
In~\cite{Dumont96}, Dumont considered chains of general substitution rules on words.

Let us now recall two results on context-free grammars.
\begin{proposition}[{\cite[Section 2.1]{Dumont96}}]\label{Dumont}
If $G=\{x\rightarrow xy, y\rightarrow xy\}$,
then
\begin{equation*}
D^n(x)=x\sum_{k=0}^{n-1}\Eulerian{n}{k}x^{k}y^{n-k}\quad {\text for}\quad n\ge 1.
\end{equation*}
\end{proposition}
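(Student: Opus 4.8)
The plan is to establish the formula by induction on $n$, combining the Leibniz product rule for the formal derivative $D$ with the classical recurrence for the Eulerian numbers, namely $\Eulerian{n+1}{k}=(k+1)\Eulerian{n}{k}+(n+1-k)\Eulerian{n}{k-1}$. Since the whole statement is an identity between polynomials in the two indeterminates $x$ and $y$, it suffices to track the coefficient of each monomial $x^{a}y^{b}$, and the grammar $G=\{x\rightarrow xy,\ y\rightarrow xy\}$ makes every application of $D$ completely explicit.

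For the base case $n=1$ I would note that $D(x)=xy$, while the right-hand side is $x\,\Eulerian{1}{0}x^{0}y^{1}=xy$ because $\Eulerian{1}{0}=1$, so the two agree. For the inductive step I would rewrite the claimed formula by absorbing the leading factor into the sum, so that the induction hypothesis reads $D^{n}(x)=\sum_{k=0}^{n-1}\Eulerian{n}{k}x^{k+1}y^{n-k}$, and then apply $D$ to each monomial. Because $D(x)=xy$ and $D(y)=xy$, the product rule gives
\[
D\left(x^{k+1}y^{n-k}\right)=(k+1)x^{k+1}y^{n-k+1}+(n-k)x^{k+2}y^{n-k}.
\]

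Summing this over $k$ expresses $D^{n+1}(x)$ as a linear combination of the monomials $x^{j+1}y^{n+1-j}$. The crux of the argument is to re-index the two resulting families and read off the coefficient of a fixed $x^{j+1}y^{n+1-j}$: the first family contributes $(j+1)\Eulerian{n}{j}$ (by setting $k=j$), and the second contributes $(n+1-j)\Eulerian{n}{j-1}$ (by setting $k=j-1$). By the Eulerian recurrence quoted above, their sum is exactly $\Eulerian{n+1}{j}$, which yields $D^{n+1}(x)=x\sum_{j=0}^{n}\Eulerian{n+1}{j}x^{j}y^{n+1-j}$ and closes the induction.

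The computation is essentially routine; the only point requiring care is matching the two index shifts to the precise form of the Eulerian recurrence. In particular I would check that the exponent of $y$ is $n+1-j$ in both families simultaneously, so that the terms genuinely combine, and that the boundary cases $j=0$ and $j=n$ come out correctly under the standard conventions $\Eulerian{n}{-1}=\Eulerian{n}{n}=0$, using $\Eulerian{n}{0}=\Eulerian{n}{n-1}=1$. This is the main (though minor) obstacle, and once the bookkeeping is arranged the identity follows immediately.
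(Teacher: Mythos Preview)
Your induction argument is correct: the base case is immediate, the application of $D$ to $x^{k+1}y^{n-k}$ via the product rule is right, and the reindexing precisely matches the Eulerian recurrence $\Eulerian{n+1}{j}=(j+1)\Eulerian{n}{j}+(n+1-j)\Eulerian{n}{j-1}$, with the boundary cases $j=0$ and $j=n$ handled by the conventions you list. Note that the paper does not supply its own proof of this proposition; it is quoted as a known result from \cite[Section~2.1]{Dumont96}, so there is no in-paper argument to compare against, but your verification is entirely standard and complete.
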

\begin{proposition}[\cite{Ma121}]\label{Ma20121}
If $G=\{y\rightarrow yz,z\rightarrow y^2\}$,
then
\begin{equation*}\label{derivapoly-1}
D^n(y)=\sum_{k=0}^{\lrf{{n}/{2}}}\widetilde{W}({n,k})y^{2k+1}z^{n-2k} \quad {\text for}\quad n\ge 0,
\end{equation*}
\begin{equation*}\label{derivapoly-2}
D^n(z)=\sum_{k=0}^{\lrf{({n-1})/{2}}}W({n,k})y^{2k+2}z^{n-2k-1}\quad {\text for}\quad n\ge 1.
\end{equation*}
\end{proposition}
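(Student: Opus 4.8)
The plan is to argue by induction on $n$, establishing the two expansions separately. They are in fact logically independent, since $D^{n+1}$ applied to either generator is obtained from the corresponding $D^{n}$ by a single application of $D$, and a single application only invokes the rules $D(y)=yz$ and $D(z)=y^{2}$; nevertheless the two arguments share an identical mechanism. First I would record the base cases: $D^{0}(y)=y$ and $D(y)=yz$ give $\widetilde{W}(0,0)=\widetilde{W}(1,0)=1$, while $D(z)=y^{2}$ gives $W(1,0)=1$, matching the stated initial values.

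For the inductive step on $D^{n}(y)$, I would apply $D$ to the claimed expansion. From the grammar and the chain rule one has $D(y^{a})=a\,y^{a}z$ and $D(z^{b})=b\,y^{2}z^{b-1}$, so the product rule yields
\[
D\bigl(y^{2k+1}z^{n-2k}\bigr)=(2k+1)\,y^{2k+1}z^{n-2k+1}+(n-2k)\,y^{2k+3}z^{n-2k-1}.
\]
Summing against $\widetilde{W}(n,k)$ and re-indexing the two resulting families of monomials (the first with $j=k$, the second with $j=k+1$) collects the coefficient of $y^{2j+1}z^{(n+1)-2j}$ as
\[
(2j+1)\,\widetilde{W}(n,j)+(n-2j+2)\,\widetilde{W}(n,j-1).
\]
The task then reduces to checking that this two-term relation is exactly the coefficient of $x^{j}$ in the recurrence $\widetilde{W}_{n+1}(x)=(nx+1)\widetilde{W}_{n}(x)+2x(1-x)\widetilde{W}_{n}'(x)$ stated above; extracting that coefficient on the right gives $n\,\widetilde{W}(n,j-1)+\widetilde{W}(n,j)+2j\,\widetilde{W}(n,j)-2(j-1)\widetilde{W}(n,j-1)$, which simplifies to the same expression. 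Hence the coefficients of $D^{n+1}(y)$ obey the defining recurrence of $\widetilde{W}(n+1,j)$, and by induction they coincide with $\widetilde{W}(n+1,j)$.

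The expansion for $D^{n}(z)$ runs in parallel. Here $D\bigl(y^{2k+2}z^{n-2k-1}\bigr)=(2k+2)\,y^{2k+2}z^{n-2k}+(n-2k-1)\,y^{2k+4}z^{n-2k-2}$, so after the same re-indexing the coefficient of $y^{2j+2}z^{n-2j}$ in $D^{n+1}(z)$ equals $(2j+2)\,W(n,j)+(n-2j+1)\,W(n,j-1)$, and I would match this against the coefficient of $x^{j}$ in $W_{n+1}(x)=(nx-x+2)W_{n}(x)+2x(1-x)W_{n}'(x)$.

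I expect the only real care to be bookkeeping: tracking the exponent shifts produced by $D$ and verifying the behaviour at the extreme indices. In particular I would confirm that the ranges $0\le k\le\lrf{n/2}$ for $D^{n}(y)$ and $0\le k\le\lrf{(n-1)/2}$ for $D^{n}(z)$ are preserved, the new top monomial (coming from the $j=k+1$ shift) appearing precisely for the correct parity of $n$, while the convention that $\widetilde{W}(n,j-1)$ and $W(n,j-1)$ vanish outside the stated range keeps the recurrences consistent at both ends. No genuine estimate is involved; the entire content lies in the exact agreement of two linear recurrences together with their initial data.
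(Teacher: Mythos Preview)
Your argument is correct. The paper does not actually supply a proof of this proposition; it is quoted from the reference~\cite{Ma121}. That said, your method---applying $D$ to the inductive expansion, reading off the resulting two-term recurrence in the coefficients, and matching it against the known recurrences for $\widetilde{W}_n(x)$ and $W_n(x)$---is exactly the mechanism the paper employs in its proof of Theorem~\ref{mthm1} for the companion expansions of $D^n(x^2)$ and $D^n(x)$, so your proposal is fully in the spirit of the paper's approach.
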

As a conjunction of Proposition~\ref{Dumont} and Proposition~\ref{Ma20121}, it is natural to consider the context-free grammar
\begin{equation}\label{grammar-dis}
G=\{x\rightarrow xy, y\rightarrow yz,z\rightarrow y^2\}.
\end{equation}
Thus $D(x)=xy,D(y)=yz$ and $D(z)=y^2$.
In the following discussion we will consider the grammar~\eqref{grammar-dis}. For convenience, we will always assume that $n\geq 1$.
The main result of this paper is the following.
\begin{theorem}\label{mthm1}
If $G=\{x\rightarrow xy, y\rightarrow yz,z\rightarrow y^2\}$,
then
\begin{equation}\label{derivapoly-3}
D^{n}(x^2)=x^2\sum_{k=1}^{n}R(n+1,k)y^kz^{n-k},
\end{equation}
\begin{equation}\label{derivapoly-4}
D^n(x)=x\sum_{k=1}^na_k(n)y^kz^{n-k}\quad {\text for}\quad n\ge 1.
\end{equation}
\end{theorem}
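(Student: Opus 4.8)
The plan is to prove both identities by induction on $n$ using the formal derivative $D$ associated with the grammar $G=\{x\rightarrow xy, y\rightarrow yz, z\rightarrow y^2\}$. The strategy is to write down the conjectured form of $D^n(x)$ and $D^n(x^2)$, apply $D$ once more, collect coefficients, and check that the resulting coefficient recurrences match the known recurrences~\eqref{ank-recurrence} for $a_k(n)$ and~\eqref{rnk-recurrence} for $R(n,k)$. Since $R(n+1,k)$ and $a_k(n)$ are determined by these recurrences together with their initial conditions, matching the recurrence and checking the base case suffices.

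First I would verify the base cases directly: for $n=1$ we have $D(x)=xy$, which matches $x\sum_{k}a_k(1)y^kz^{1-k}=xy$ since $a_1(1)=1$; and $D(x^2)=2xD(x)=2x^2y$, which matches $x^2\sum_k R(2,k)y^kz^{1-k}=2x^2y$ since $R(2,1)=2$. Next, for the inductive step on~\eqref{derivapoly-4}, I would apply $D$ to the monomial $xy^kz^{n-k}$ using the product rule and the grammar: $D(x)=xy$ contributes a factor raising the $y$-degree by one, $D(y^k)=ky^kz\cdot y^{k-1}=ky^{k}z$ wait—more carefully, $D(y^k)=ky^{k-1}D(y)=ky^{k-1}\cdot yz=ky^kz$, which keeps the $y$-degree fixed and raises the $z$-degree by one, and $D(z^{n-k})=(n-k)z^{n-k-1}D(z)=(n-k)z^{n-k-1}y^2$, which raises the $y$-degree by two and lowers the $z$-degree by one. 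Summing these three contributions and re-indexing to read off the coefficient of $xy^kz^{n+1-k}$, I would obtain exactly the three terms $k\,a_k(n)+a_{k-1}(n)+(n-k+1)\,a_{k-2}(n)$, matching~\eqref{ank-recurrence}.

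The computation for $D^n(x^2)$ proceeds identically: the only structural difference is that the first letter is $x^2$ rather than $x$, and since $D(x^2)=2x^2y$ the factor $x^2$ persists and $D$ acting on the $x^2$ part raises the $y$-degree by one (with a benign factor that is absorbed once the leading factor is normalized to $x^2$). Differentiating the monomial $x^2y^kz^{n-k}$ and collecting the coefficient of $x^2y^kz^{n+1-k}$ should reproduce $k\,R(n+1,k)+2R(n+1,k-1)+(n+1-k)R(n+1,k-2)$, which is precisely~\eqref{rnk-recurrence} with $n$ replaced by $n+1$. I expect the main obstacle to be purely bookkeeping: getting the re-indexing and the boundary coefficients correct, in particular reconciling the index shift between $R(n+1,k)$ (the object on the left) and the recurrence~\eqref{rnk-recurrence} as stated, and checking that the coefficient $2$ in the middle term emerges from the $x^2\to 2x^2y$ rule rather than a $1$. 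These edge-of-summation issues (the $k=1$ and $k=n$ terms) are the only delicate points; the interior recurrence follows mechanically from the three grammar rules.
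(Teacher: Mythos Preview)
Your approach is essentially the same as the paper's: define the coefficients of $D^n(x^2)$ and $D^n(x)$, apply $D$ once more using the grammar rules, and verify that the resulting three-term recurrences coincide with~\eqref{rnk-recurrence} and~\eqref{ank-recurrence}, together with a check of the initial values. The only caveat is a minor index slip in your last-term coefficients (the correct values are $(n-k+2)a_{k-2}(n)$ and $(n+2-k)R(n+1,k-2)$, matching the recurrences with $n$ shifted up by one more than you wrote), but this is exactly the bookkeeping you already flagged and does not affect the argument.
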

\begin{proof}
Note that $D(x^2)=2x^2y$ and $D^2(x^2)=2x^2yz+4x^2y^2$.
For $n\geq 1$,
we define
\begin{equation}\label{Dnx-def}
D^n(x^2)=x^2\sum_{k=1}^nM(n+1,k)y^kz^{n-k}
\end{equation}
Then $M(2,1)=R(2,1)=2,M(3,1)=R(3,1)=2$ and $M(3,2)=R(3,2)=4$.
From~\eqref{Dnx-def}, we get
\begin{align*}
  D^{n+1}(x^2)& =D(D^n(x^2)) \\
              & =x^2\sum_{k=1}^nkM(n+1,k)y^kz^{n-k+1}+2x^2\sum_{k=1}^nM(n+1,k)y^{k+1}z^{n-k} \\
              & +x^2\sum_{k=1}^n(n-k)M(n+1,k)y^{k+2}z^{n-k-1}.
\end{align*}
Thus
$$M(n+2,k)=kM(n+1,k)+2M(n+1,k-1)+(n-k+2)M(n+1,k-2).$$
Comparing with~\eqref{rnk-recurrence}, we see that
the coefficients $M(n,k)$ satisfy the same recurrence relation and initial conditions as $R(n,k)$, so they agree.
Using~\eqref{ank-recurrence},
the formula~\eqref{derivapoly-4} can be proved in a similar way and we omit the proof for brevity.
\end{proof}

For the grammar~\eqref{grammar-dis}, we have
$$D(xy)=D(xz)=xyz+xy^2.$$
Hence
$D^n(xy)=D^n(xz)$ for $n\geq 1$. Moreover, $D^{n+1}(x^2)=D^n(2x^2y)$.
Using~\eqref{Dnab-Leib}, we obtain the following identities:
$$D^n(x^2)=\sum_{k=0}^n\binom{n}{k}D^k(x)D^{n-k}(x),$$
$$D^n(2x^2y)=2D^n(x^2y)=2\sum_{k=0}^n\binom{n}{k}D^k(x)D^{n-k}(xy)=2\sum_{k=0}^n\binom{n}{k}D^{k}(x)D^{n-k+1}(x),$$
$$D^n(2x^2y)=2\sum_{k=0}^n\binom{n}{k}D^k(x^2)D^{n-k}(y),$$
$$D^{n+1}(x)=D^n(xy)=\sum_{k=0}^n\binom{n}{k}D^k(x)D^{n-k}(y),$$
$$D^n(xz)=\sum_{k=0}^n\binom{n}{k}D^k(x)D^{n-k}(z).$$
Thus we can immediately use Proposition~\ref{Ma20121} and Theorem~\ref{mthm1} to get several convolution formulas.
\begin{corollary}\label{cor6}
For $n\geq 1$, we have
\begin{equation}\label{convolution}
R_{n+1}(x)=\sum_{k=0}^n\binom{n}{k}T_k(x)T_{n-k}(x),
\end{equation}
$$R_{n+2}(x)=2\sum_{k=0}^n\binom{n}{k}T_k(x)T_{n-k+1}(x),$$
$$R_{n+2}(x)=2x\sum_{k=0}^n\binom{n}{k}R_{k+1}(x)\widetilde{W}_{n-k}(x^2),$$
$$T_{n+1}(x)=x\sum_{k=0}^n\binom{n}{k}T_k(x)\widetilde{W}_{n-k}(x^2),$$
$$T_{n+1}(x)=T_n(x)+x^2\sum_{k=0}^{n-1}\binom{n}{k}T_k(x){W}_{n-k}(x^2).$$
\end{corollary}

From Corollary~\ref{cor6}, we see that there is a close relationship between $R_n(x),T_n(x),W_n(x)$ and $\widetilde{W}_n(x)$.
In particular, it follows from~\eqref{convolution} that
\begin{equation}\label{convolution2}
\sum_{n=0}^\infty \frac{z^n}{n!}\sum_{k=0}^nR(n+1,k)x^{n-k}=\left(\sum_{n=0}^\infty \frac{z^n}{n!}\sum_{k=0}^na_k(n)x^{n-k}\right)^2.
\end{equation}
Combining~\eqref{CarlitzGF} and~\eqref{convolution2}, it is easy to verify that
\begin{equation*}
\sum_{n=0}^\infty \frac{z^n}{n!}\sum_{k=0}^na_k(n)x^{n-k}=-\sqrt{\frac{1-x}{1+x}}\left(\frac{\sqrt{1-x^2}+\sin (z\sqrt{1-x^2})}{x- \cos (z\sqrt{1-x^2})}\right).
\end{equation*}

We end this section by giving another characterization of the numbers $R(n,k)$ and
the proof follows along the same lines as the proof of~\eqref{derivapoly-3}. 
\begin{theorem}
If $G=\{x\rightarrow 2xy, y\rightarrow yz,z\rightarrow y^2\}$,
then
\begin{equation*}
D^{n}(x)=x\sum_{k=1}^{n}R(n+1,k)y^kz^{n-k}\quad {\text for}\quad n\ge 1.
\end{equation*}
\end{theorem}
\section{Relationship to Andr\'e permutations}\label{And-Permutations}
A permutation $\pi=\pi(1)\pi(2)\cdots \pi(n)\in\msn$ is called an {\it Andr\'e permutation}
if it satisfies the following conditions (see~\cite{Hetyei96,Purtill93}):
\begin{enumerate}
 \item [({$c_1$})] $\pi$ has no double descents, i.e., there is no $i\in \{2,3,\ldots,n-1\}$ such that $\pi(i-1)>\pi(i)>\pi(i+1)$;
  \item [({$c_2$})]For all $j,j'\in \{2,3,\ldots,n\}$ which satisfy $j<j'$, if $\pi(j-1)=\max \{\pi(j-1),\pi(j),\pi(j'-1),\pi(j')\}$ and $\pi(j')=\min \{\pi(j-1),\pi(j),\pi(j'-1),\pi(j')\}$, then
      there is a $j''$ such that  $j<j''<j'$ and $\pi(j'')<\pi(j')$.
\end{enumerate}
The Andr\'e permutations are a variant of simsun permutations and are closely related to the enumeration of the monomials of the cd-index of $\msn$.
An {\it augmented Andr\'e permutation} is an Andr\'e permutation with $\pi(n)=n$.
Denote by $\man$ the augmented Andr\'e permutations in $\msn$.
\begin{example}
$$\ma=\{1\},\quad \mb=\{12\},\quad \mc=\{123,213\},$$
$$\md=\{1234,1324,2134,2314,3124\}.$$
\end{example}
Let $d(n,k)$ denote the number of
permutations in $\man$ with $k-1$ left peaks (see~\cite[Section 3]{Chow11}). The number $d(n,k)$
also counts the number of increasing 0-1-2 trees on $[n]$ with $k$ leaves (see~\cite[\textsf{A094503}]{Sloane}).
Let $D_n(x)=\sum_{k\geq 1}d(n,k)x^k$.
It is well known that the polynomials $D_n(x)$ satisfy the recurrence relation
\begin{equation}
D_n(x)=nxD_{n-1}(x)+x(1-2x)D_{n-1}'(x),
\end{equation}
for $n\geq 1$, with initial value $D_0(x)=1$ (see~\cite[(7.1)]{Foata01}). The first few terms of $D_n(x)$'s are given as follows:
\begin{align*}
  D_1(x)& =x, \\
  D_2(x)& =x, \\
  D_3(x)& =x+x^2,\\
  D_4(x)& =x+4x^2,\\
  D_5(x)& =x+11x^2+4x^3.
\end{align*}

The Eulerian polynomial $A_n(x)$ admits several expansions in terms of different polynomial bases.
One representative example is the classical {\it Frobenius formula} (see~\cite{Chow08}):
\begin{equation}\label{Frobenius}
A_n(x)=\sum_{i=1}^ni!{n \brace i}x^i(1-x)^{n-i}.
\end{equation}

Motivated by the expansion~\eqref{Frobenius}, we find the following result.
\begin{theorem}
For $n\geq 2$, we have
\begin{equation*}
D_n(x)=\frac{1}{2}\sum_{k=1}^{n-1}R(n,k)x^k(1-x)^{n-1-k}.
\end{equation*}
Equivalently,
\begin{equation}\label{RnxDnx}
R_n(x)=2(1+x)^{n-1}D_n\left(\frac{x}{1+x}\right).
\end{equation}
\end{theorem}
\begin{proof}
Let $F_n(x)=\frac{1}{2}\sum_{k=1}^{n-1}R(n,k)x^k(1-x)^{n-1-k}$ for $n\geq 2$. Clearly, $F_2(x)=x,F_3(x)=x+x^2$ and $F_4(x)=x+4x^2$. Set $F_0(x)=1$ and $F_1(x)=x$.
Note that $$R_n(x)=2(1+x)^{n-1}F_n\left(\frac{x}{1+x}\right)\quad {\text for}\quad n\ge 2.$$
Using~\eqref{Rnx-recurrence}, we get
$$F_n\left(\frac{x}{1+x}\right)=\frac{nx}{1+x}F_{n-1}\left(\frac{x}{1+x}\right)+
\frac{x(1-x)}{(1+x)^2}F_{n-1}'\left(\frac{x}{1+x}\right).$$
Thus $F_n(x)=nxF_{n-1}(x)+x(1-2x)F_{n-1}'(x)$. Hence $F_n(x)$ satisfies the same recurrence relation and initial conditions as $D_n(x)$, so they agree.
\end{proof}

Note that $\deg D_n(x)=\lrc{\frac{n}{2}}$.
Combining~\eqref{TnxRnx} and~\eqref{RnxDnx}, the following corollary gives a combinatorial interpretation for the fact that the polynomial $T_n(x)$ is divisible by $(1+x)^{\lrf{\frac{n}{2}}}$ (see~\cite[Corollary 3.2]{Sta08}).
\begin{corollary}
For $n\geq 0$, we have
$$T_n(x)=(1+x)^{n}D_n\left(\frac{x}{1+x}\right).$$
\end{corollary}

\end{document}